\begin{document}

\setcounter{page}{1} \setcounter{section}{0}
\newtheorem{theorem}{Theorem}[section]
\newtheorem{lemma}[theorem]{Lemma}
\newtheorem{corollary}[theorem]{Corollary}
\newtheorem{proposition}[theorem]{Proposition}
\newtheorem{observation}[theorem]{Observation}
\newtheorem{definition}[theorem]{Definition}
\newtheorem{claim}{Claim}
\newtheorem{conjecture}[theorem]{Conjecture}
\newtheorem{problem}[theorem]{Problem}

\title{On the Ban-Linial Conjecture}
\author{
{\sc Matt DeVos}$\,^{a}$, {\sc Kathryn Nurse}$\,^{b}$\\
\mbox{}\\
{\small $(a)$ Simon Fraser University, Burnaby Canada}\\
{\small $(b)$ \'Ecole Normale Sup\'erieure, Paris France}
}

\date{\today}

\maketitle

\begin{abstract}
Let $G$ be a graph and let $\{X_0,X_1\}$ be a partition of $V(G)$.  This partition is called external or unfriendly if every $x \in X_i$ has at least as many neighbours in $X_{1-i}$ as in $X_i$.  Every maximum edge-cut gives rise to an external partition, so these partitions are always guaranteed to exist.  However, it remains a challenge to find such partitions with additional restrictions.  Ban and Linial have conjectured that in the case when $G$ is cubic, there always exists an external partition $\{X_0,X_1\}$ for which $-2 \le |X_0| - |X_1| \le 2$.  We prove this in two special cases: whenever $G$ can be decomposed into a cycle and a tree, and whenever $G$ has a cubic tree $T$ for which $G - E(T)$ is bipartite.
\end{abstract}

\section{Introduction}

Throughout we assume graphs are finite and simple, and we use \cite{BondyJ.A.JohnAdrian2008Gt/J} for terms not defined here. Given a set $S$, a \emph{split} of $S$ is a pair of sets $(T,U)$ whose disjoint union is equal to $S$ (i.e., $T \sqcup U = S$). For a graph $G = (V,E)$ we define a \emph{split} of $G$ to be a split of $V$.  We let $e(G) = |E|$ and for a split $(X,Y)$ of $G$ we let $E(X,Y) = 
\{ xy \in E(G) \mid \mbox{$x \in X$ and $y \in Y$} \}$ and $e(X,Y) = |E(X,Y)|$.  We say that the split $(X,Y)$ is \emph{external} \emph{(internal)} if the graph $H = G - E(X,Y)$ satisfies $\mathrm{deg}_H(v) \le \frac{1}{2} \mathrm{deg}_G(v)$ $(\mathrm{deg}_H(v) \ge \frac{1}{2} \mathrm{deg}_G(v))$ for every $v \in V(G)$.  

For every graph $G$, if we choose $(X,Y)$ to be a split for which $e(X,Y)$ is maximum (i.e., a maxcut of $G$), then it is immediate that $(X,Y)$ is external.  So it is not difficult to find one external split.  However, finding external splits with additional conditions remains quite challenging.  The following conjecture in this vein is the motivation for our work.

\begin{conjecture}[Ban-Linial]
Every cubic graph has an external split $(X,Y)$ for which $-2 \le |X| - |Y| \le 2$.
\end{conjecture}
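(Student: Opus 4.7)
The plan is to start from a maximum cut $(X,Y)$, which is automatically external, and then perform a sequence of local modifications that drive $||X|-|Y||$ down to at most $2$ while preserving externality at every step. In a cubic graph, the condition $\mathrm{deg}_H(v) \le \tfrac{3}{2}$ forces $\mathrm{deg}_H(v) \le 1$, so externality is equivalent to each vertex having at most one neighbour on its own side; in a maxcut this is automatic, and moreover both $G[X]$ and $G[Y]$ are matchings. The work is then to balance the sides without reintroducing a vertex of same-side degree $\ge 2$.

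The first thing I would try is a single-vertex relocation, but this quickly shows why the conjecture is delicate: moving $v$ from $X$ to $Y$ flips its same-side degree from $a \le 1$ to $3-a \ge 2$, immediately violating externality at $v$. So every useful move must be more elaborate. The natural candidates are \emph{alternating switches}: pick a walk whose edges alternate between $E(X,Y)$ and $E(G[X]) \cup E(G[Y])$, and toggle the side of a carefully chosen subset of its vertices so that each affected vertex keeps at most one same-side neighbour. Cycles and paths in the ``bad'' subgraph $G[X] \cup G[Y]$ are the basic objects to switch along, and one should expect that balancing $|X|-|Y|$ by two corresponds to flipping an \emph{odd-length} alternating path whose endpoints lie on the heavier side.

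Concretely, I would take an external split minimizing $||X|-|Y||$ subject to $|X| - |Y| \ge 4$, let $M_X,M_Y$ be the induced matchings, and search inside the auxiliary graph $M_X \cup M_Y \cup E(X,Y)$ for an alternating structure whose toggle moves a net two vertices from $X$ to $Y$ and leaves every local same-side degree at most one. The two cases proved in the paper (cycle-plus-tree and cubic-tree-with-bipartite-complement) should be read as settings that guarantee such a switchable structure: in a cycle-plus-tree decomposition the cycle supplies a long alternating cycle and the tree is easily two-coloured, while the bipartite complement of a cubic spanning tree controls the same-side degrees globally. I would try to isolate the combinatorial input these special cases provide and look for a weaker structural substitute in a general cubic graph.

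The main obstacle, and the reason the conjecture is still open in general, is exactly the \emph{propagation} problem for alternating switches: when one toggles a set $S \subseteq X$ across the cut, each vertex of $S$ that was saturated by $M_X$ may pick up two or three same-side neighbours on the $Y$-side, forcing those neighbours to be toggled as well, and so on. Without a global hypothesis on $G$, this cascade is not known to terminate in a configuration that both preserves externality and produces a nonzero net transfer; finding such a terminating cascade, or else an entirely different argument (say, via a potential-function or entropy-compression approach counting pairs $(\text{cut},\text{imbalance})$), is where I would expect the real difficulty to lie.
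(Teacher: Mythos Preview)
The statement you are addressing is the Ban--Linial \emph{Conjecture}; the paper does not prove it in general, and neither does your proposal. You concede this yourself in the final paragraph, where you correctly identify the propagation obstruction to the maxcut-plus-alternating-switch strategy. What you have written is a discussion of a plausible line of attack together with an honest account of why it stalls, not a proof.

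Your reading of how the paper handles its two special cases is also off. The paper does \emph{not} start from a maxcut and search for switchable alternating structures. It works in the opposite direction: first fix a proper 2-colouring of the bipartite piece (the even cycle, or $G - E(T)$), so that every edge outside the cubic tree $T$ is already a cut edge; then invoke an inductive lemma on cubic trees (Lemma~\ref{unrootedlemma}) that extends any prescribed 2-colouring of the leaves to all of $T$ with discrepancy in $\{0,\pm 1,\pm 2\}$ and at most one vertex of same-side degree $\ge 2$. The discrepancy bound forces a bisection via Observation~\ref{disc2bisect}, and the single possible bad vertex makes the bisection \emph{nearly external}, which a one- or two-vertex swap (Lemma~\ref{nearlyexternal2banlinial}) then repairs. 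The engine is the tree lemma, proved by cherry-reductions, not any alternating-path argument on a maxcut. If you want to push the paper's method further, the natural question is which other cubic graphs admit a cubic subtree whose complement is close enough to bipartite for a variant of Lemma~\ref{unrootedlemma} to apply; your alternating-switch framework is a genuinely different programme and, as you note, currently lacks the key termination ingredient.
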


As motivation, notice that this conjecture is satisfied by every 3-edge-colourable cubic graph $G$.  To see this, let $\{M_1, M_2, M_3\}$ be three pairwise disjoint perfect matchings given by a 3-edge-colouring, and let $(X,Y)$ be a bipartition of $G - M_3$. Then $(X,Y)$ is an external split in $G$ with $|X| = |Y|$. The Petersen graph does not have an external split $(X,Y)$ with $|X| = |Y|$ and initially it seemed plausible this would be the only such connected cubic graph.  However, counterexamples to this stronger conjecture were found by Esperet, Mazzuoccolo and Tarsi \cite{EsperetL.2017FaBi, BanAmir2016IPoR}.  

The Ban-Linial conjecture has been verified for some classes of cubic graphs: Abreu et. al. verified this conjecture for bridgeless claw-free cubic graphs \cite{ABREU2018214}, and Zerafa has proved it for a family of snarks \cite{ZerafaJeanPaul2022BCat}.  However there are relatively few general results in this direction.  Our main contribution is Theorem \ref{main}, which resolves the Ban-Linial conjecture for cubic graphs having certain decompositions.  We say that a graph $G$ can be \emph{decomposed into a tree and a cycle} if there exists a tree $T$ and a cycle $C$ so that $\{E(T), E(C)\}$ is a partition of $E(G)$.  We call a tree $T$ \emph{cubic} if all vertices in $T$ have degree 1 or 3.  

\begin{theorem}
\label{main}
A cubic graph $G$ satisfies the Ban-Linial Conjecture if one of the following properties hold:
\begin{enumerate}
\item $G$ contains a cubic tree $T$ so that $G - E(T)$ is bipartite.
\item $G$ can be decomposed into a tree and a cycle.
\end{enumerate}
\end{theorem}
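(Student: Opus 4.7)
In a cubic graph $G$, a split $(X,Y)$ is external exactly when the edges inside the colour classes form a matching $M$; equivalently, $(X,Y)$ is a bipartition of $G - M$ for some matching $M \subseteq E(G)$. So both parts of the theorem amount to finding such an $M$ together with a bipartition of $G - M$ whose parts differ in size by at most $2$.

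For part (1), I would first note that $T$ must span $V(G)$: every degree-$3$ vertex of $T$ already uses its full $G$-degree on tree edges, so a non-tree vertex could only be reached via leaves of $T$, and a short parity argument (with $G$ connected) excludes this configuration. Thus $H := G - E(T)$ has degree $0$ at internal vertices and degree $2$ at the leaf set $L$, and being bipartite and $2$-regular on $L$, it is a disjoint union of even cycles. Any bipartition $(A,B)$ of $H$ is balanced, with $|A|=|B|=|L|/2$. It remains to extend this to the internal vertices $I$ of $T$ so that every internal vertex has at most one same-colour tree-neighbour (the external condition), while keeping the overall imbalance at most $2$. I would establish such an extension by a pruning induction on cubic trees: pick a pair of sibling leaves $\ell_1,\ell_2$ with common neighbour $u$. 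If their prescribed colours agree, $u$ is forced to the opposite colour, and we recurse on the cubic tree $T-\{\ell_1,\ell_2\}$ with $u$ as a new prescribed-colour leaf. If they differ, either colour of $u$ works locally but forces $u$'s unique internal neighbour $u'$ to the opposite colour; the induction should be strengthened to handle one such propagated forced colour per step.

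For part (2), a degree count forces $V(C)=L(T)$ and $T$ to be a cubic spanning tree, so $H=C$. If $|C|$ is even, part (1) applies. Otherwise pick an edge $e=u_0 v_0 \in C$ and put $e \in M$. The path $C-e$ has even length, so its bipartition places $u_0$ and $v_0$ on the same side; $2$-colour $L=V(C)$ accordingly. Then apply the tree-extension of part (1), with the additional constraint that the tree-neighbours of $u_0$ and $v_0$ receive the opposite colour (since $u_0,v_0$ have already used their one same-colour quota via $e$). These constraints are exactly the forced propagations already handled by the strengthened induction. If $G-e$ happens not to be bipartite, the same approach extends with $M$ chosen as a larger matching of ``same-side'' (with respect to the tree bipartition) $C$-edges that hits every offending fundamental cycle.

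The main obstacle throughout is achieving the balance $||X|-|Y|| \le 2$ simultaneously with the external condition. The induction affords several sources of freedom --- the colour of $u$ in the differing-siblings case, the independent bipartition of each even cycle of $H$ in part (1), and the choice of $M \cap C$ in part (2) --- and the delicate step is to verify that these freedoms always suffice to absorb the cumulative imbalance arising from the forced propagations. This likely requires a careful tracking of imbalance through the pruning recursion, and possibly a sharpened inductive statement recording which imbalances on internal vertices are achievable for a given leaf colouring.
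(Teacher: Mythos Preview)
Your plan matches the paper's route: properly $2$-colour $G-E(T)$, then extend to the internal vertices of $T$ by an induction that repeatedly strips a pair of sibling leaves. The paper's key Lemma~2.3 is exactly this induction, with your two cases (same-colour siblings force the parent; opposite-colour siblings leave a choice) as its basic moves, and your ``sharpened inductive statement recording which imbalances are achievable'' is precisely what is proved there.

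The idea you are missing, and which dissolves the ``delicate step'' you flag, is a relaxation: the paper does \emph{not} insist that the tree extension be external. It allows at most one internal vertex to have two same-colour neighbours, calling the resulting split \emph{nearly external}. With this slack the induction goes through cleanly and one can demand $\mathrm{disc}_T\in\epsilon\{0,1,2\}$ for either sign $\epsilon$; the single bad vertex is then removed afterwards by a one-line maximality argument (move the bad vertex across; if this creates a new bad vertex, swapping both strictly increases $e(X,Y)$). Without the relaxation the freedom you list need not suffice: already for the tree with one centre, three degree-$3$ neighbours, and six leaves alternately coloured, any external extension is forced to $\mathrm{disc}_T=\pm 3$, whereas allowing one bad vertex attains $\mathrm{disc}_T=0$. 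Two minor points: your claim that $T$ must span in part~(1) is neither needed nor justified (odd-degree non-tree vertices are not excluded by bipartiteness of $G-E(T)$), and in part~(2) the sentence about ``$G-e$ not bipartite'' is a confusion --- you only need the $2$-colouring of the path $C-e$, which always exists. The paper handles odd $C$ by arranging the unique monochromatic $C$-edge to land at a cherry of $T$ and then invoking a rooted variant of the tree lemma, which is just the ``additional constraint at a neighbour'' you describe.
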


For a cubic graph $G$ an external split $(X,Y)$ has the property that every component of both $G[X]$ and $G[Y]$ has at most two vertices.  This suggests another notion that turns out to be quite rich.  We say that a split $(X,Y)$ of a graph $G$ is a $k$-\emph{split} if every component of both $G[X]$ and $G[Y]$ contains at most $k$ vertices. This is a special 2-colouring with clustering $k$, for more on cluster colouring see \cite{WoodSurv}. 

A split ($k$-split) $(X,Y)$ is called a \emph{bisection} \emph{($k$-bisection)} if $|X| = |Y|$.  So the Ban-Linial conjecture asserts that every cubic graph has a 1-split that is close to being a bisection.  In fact, $k$-bisections of cubic graphs are of special interest thanks to an approach toward Tutte's 5-flow conjecture developed by Jaeger \cite{6eaa05ae-624e-36ea-92fc-663066b4cf0a}.  We briefly sketch this here, but a more detailed description may be found in \cite{EsperetL.2017FaBi}.  Let $G$ be an oriented cubic graph with a nowhere-zero $k$-flow (i.e. a function $\phi : E(G) \rightarrow \{ \pm 1, \ldots, \pm (k-1) \}$ so that at every vertex $v$ the sum of $\phi$ on the edges directed toward $v$ is equal to the sum of $\phi$ on the edges directed away.)  For every edge $e$, if $\phi(e) < 0$, then we reverse $e$ and replace $\phi(e)$ by its negation (note that this preserves our nowhere-zero flow).  Since $\phi$ is a flow, it must satisfy the following cut-conservation property:
\begin{itemize}
\item For every split $(U,W)$ the sum of $\phi$ on the edges from $U$ to $W$ must equal the sum of $\phi$ on the edges from $W$ to $U$.  
\end{itemize}
It follows from this (and $\phi > 0$) that the orientation of our graph is strongly connected.  In particular, this gives us a split $(X,Y)$ of $G$ so that every vertex in $X$ $(Y)$ has outdegree (indegree) 1.  Note that by degree sums, we must have $|X| = |Y|$ so $(X,Y)$ is a bisection.  Let $H$ be a component of $G[X]$ and observe that $H$ has maximum outdegree 1, so $H$ is either a tree or is unicyclic.  Since our graph is strongly connected, the latter is not possible, so $H$ must be a tree.  Now there is exactly one edge directed from $V(H)$ to its complement (with flow value at most $k-1$) and there are $|V(H)| + 1$ edges directed to $V(H)$ from its complement (each with flow value at least 1).  Therefore, cut-conservation implies that $|V(H)| + 1 \le k-1$ so $|V(H)| \le k-2$ and $(X,Y)$ is a $(k-2)$-bisection of $G$.  Jaeger has shown how to reverse this process under an additional assumption.  More precisely, he proved that the presence of a nowhere-zero $k$-flow in a cubic graph $G$ is equivalent to the presence of a $(k-2)$-bisection that satisfies a certain condition for every edge cut.

\section{Proofs}

The Ban-Linial Conjecture concerns splits $(X,Y)$ that are close to external bisections, but have the defect that $|X|$ and $|Y|$ are close but might be not equal.  Our proofs naturally give an alternative form of deficiency here that we define next.  We say that a split $(X,Y)$ is \emph{nearly external} if there is at most one vertex $v$ so that $\mathrm{deg}_G(v) <  2 \mathrm{deg}_{G- E(X,Y)} (v)$.  Next we show that nearly external bisections imply the splits required for the Ban-Linial Conjecture.

\begin{lemma}
\label{nearlyexternal2banlinial}
If $G$ is a cubic graph with a nearly external bisection, then $G$ satisfies the Ban-Linial Conjecture.
\end{lemma}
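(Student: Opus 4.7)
The plan is as follows. Let $(X,Y)$ be the given nearly external bisection; if it is already external then we are done, so assume it has a unique bad vertex $v^{*}$ (i.e.\ a vertex with at least two same-side neighbours), which by symmetry we may place in $X$. Since $G$ is cubic, $v^{*}$ has either $3$ neighbours in $X$, or $2$ neighbours in $X$ and $1$ in $Y$.

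First I would dispose of the easy cases by a single vertex move. If all three neighbours of $v^{*}$ lie in $X$, simply move $v^{*}$ to $Y$; no vertex of $Y$ is adjacent to $v^{*}$, so no new badness can be introduced, and the result is an external split with size difference $-2$. If $v^{*}$ has exactly one neighbour $y_{0}\in Y$, move $v^{*}$ to $Y$ again; the only vertex whose same-side count can increase is $y_{0}$, and this actually makes $y_{0}$ bad only if it already had a same-side neighbour $y_{1}\in Y$. Otherwise we are again done with size difference $-2$.

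The remaining, harder case is when $v^{*}$ has neighbours $a,b\in X$ and $y_{0}\in Y$, and $y_{0}$ has a same-side neighbour $y_{1}$; let $w$ denote the third neighbour of $y_{0}$, which lies in $X\setminus\{v^{*}\}$. Here I would perform the swap $(X,Y)\mapsto(X',Y')$ with $X'=(X\setminus\{v^{*}\})\cup\{y_{0}\}$ and $Y'=(Y\setminus\{y_{0}\})\cup\{v^{*}\}$, which clearly preserves the bisection condition. A vertex-by-vertex check, using that $v^{*}$ was the unique bad vertex of $(X,Y)$, shows that every vertex other than $w$ has a non-positive change in its same-side neighbour count (for example, $v^{*}$ itself has no neighbour in the new $Y'$, and $y_{1}$ loses $y_{0}$), so the only possible bad vertex in $(X',Y')$ is $w$. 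If $w\in\{a,b\}$ (i.e.\ a triangle through $v^{*}y_{0}$) or if $w$ has no same-side neighbour in $X$, then $w$ remains good and $(X',Y')$ is an external bisection; otherwise $(X',Y')$ is again a nearly external bisection with unique bad vertex $w$, and I iterate the procedure.

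Termination is controlled by the potential $\Phi(X,Y):=e(G[X])+e(G[Y])$. The swap deletes the same-side edges $v^{*}a$ and $v^{*}b$ from $X$ and inserts the single edge $y_{0}w$; symmetrically it deletes $y_{0}y_{1}$ from $Y$ and inserts nothing new, since $v^{*}$'s only neighbour in $Y$ was $y_{0}$. Hence $\Phi$ strictly decreases by exactly $2$ at each iteration. Since $\Phi$ is a non-negative integer, after finitely many steps we must land in one of the terminating subcases and produce an external split with size difference in $\{-2,0\}$, verifying the Ban-Linial conjecture. The main technical obstacle is the careful local verification that only $w$ can become bad after a swap; once this is established, the potential argument packages the iteration cleanly.
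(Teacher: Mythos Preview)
Your proof is correct and takes essentially the same approach as the paper: the paper phrases it extremally (pick a nearly external bisection with $e(X,Y)$ maximum, then one move/swap either finishes or contradicts maximality), while you iterate with the potential $\Phi=e(G[X])+e(G[Y])$, and since $e(X,Y)+\Phi=e(G)$ these are the same argument. If anything, your write-up is more complete, since the paper asserts without details that the swapped pair is again a nearly external bisection with strictly larger cut, which is exactly the vertex-by-vertex check and $\Phi$-drop you carry out.
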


\begin{proof}
Choose a nearly external bisection $(X,Y)$ so that $e(X,Y)$ is maximum and let $H = G - E(X,Y)$.  If $(X,Y)$ is external, we have nothing left to prove.  Otherwise, we may assume (without loss) that $y \in Y$ is the unique vertex with $d_H(y) \ge 2$.  If $(X \cup \{y\}, Y \setminus \{y\})$ is external the proof is complete.  Otherwise, there is a unique vertex $x \in X \cup \{y\}$ with two neighbours in this set. In this last case $X' = (X \setminus \{x\}) \cup \{y\}$ and $Y' = V \setminus X'$ contradict the choice of $(X,Y)$ and this completes the proof. 
\end{proof}

For a graph $G = (V,E)$ and a split $(X,Y)$ we define the \emph{discrepancy} of $(X,Y)$ to be
\[ \mathrm{disc}_G(X,Y) = e ( G[X] ) - e(G[Y]) \]
and we drop the subscript when the graph is clear from context.  For regular graphs, the discrepancy of a split is proportional to the difference in sizes of the sets as shown by the following observation.  Later we will take advantage of this to control $|X| - |Y|$ by discrepancy.

\begin{observation}
\label{disc2bisect}
If $(X,Y)$ is a split of a cubic graph, then 
\[ 3\big( |X| - |Y| \big) = 2 \, \mathrm{disc}(X,Y) \]
In particular, if $|\mathrm{disc}(X,Y)| \le 2$ then $(X,Y)$ is a bisection.
\end{observation}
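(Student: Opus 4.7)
The plan is to prove the identity by a standard double-counting of edge-endpoints, and then to extract the bisection statement via a divisibility argument.

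For the first identity, I would fix the split $(X,Y)$ and compute the sum of degrees of vertices in $X$ two different ways. Since $G$ is cubic, $\sum_{v \in X} \deg_G(v) = 3|X|$. On the other hand, each edge of $G[X]$ contributes $2$ to this sum while each edge in $E(X,Y)$ contributes exactly $1$, so the same sum equals $2 e(G[X]) + e(X,Y)$. Applying the identical argument to $Y$ yields the pair
\begin{align*}
3|X| &= 2 e(G[X]) + e(X,Y),\\
3|Y| &= 2 e(G[Y]) + e(X,Y).
\end{align*}
Subtracting the second equation from the first cancels the $e(X,Y)$ terms and gives precisely $3(|X| - |Y|) = 2(e(G[X]) - e(G[Y])) = 2\,\mathrm{disc}(X,Y)$, as required.

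For the second assertion, I would exploit the fact that the identity forces a divisibility constraint: $3$ divides $2\,\mathrm{disc}(X,Y)$, and since $\gcd(2,3) = 1$, we have $3 \mid \mathrm{disc}(X,Y)$. Under the hypothesis $|\mathrm{disc}(X,Y)| \le 2$, the only multiple of $3$ in $[-2,2]$ is $0$, so $\mathrm{disc}(X,Y) = 0$. Feeding this back into the identity gives $|X| = |Y|$, which is the definition of a bisection.

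There is no real obstacle here; the content is a one-line handshake argument plus a parity/divisibility observation. The only thing to be a little careful about is making sure that edges in $E(X,Y)$ are counted once (not twice) in each degree sum, which is why $e(X,Y)$ appears with coefficient $1$ and $e(G[X])$ or $e(G[Y])$ with coefficient $2$. The statement that $|\mathrm{disc}| \le 2$ suffices (rather than requiring $|\mathrm{disc}| \le 1$) relies crucially on this divisibility-by-$3$ step, which is the only slightly non-obvious point worth highlighting.
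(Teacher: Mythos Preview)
Your proof is correct and follows essentially the same approach as the paper: both compute $\sum_{x \in X} d(x) - \sum_{y \in Y} d(y)$ in two ways to obtain the identity, and deduce the bisection claim from it. Your write-up is more explicit (spelling out the two degree-sum equations and the divisibility-by-$3$ step), but the underlying idea is identical.
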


\begin{proof}
To see that the equation holds, observe that both sides are equal to $\sum_{x \in X} d(x) - \sum_{y \in Y} d(y)$.  The second part follows immediately from the first.
\end{proof}

Next we prove the key lemma on cubic trees that we use to derive one part of our main theorem.  For a nonnegative integer $k$ and a graph $G$ we let $V_k(G) = \{ v \in V(G) \mid d(v) = k \}$.  If $T$ is a tree and $u,u' \in V_1(G)$ have a common neighbour, then we call $\{u,u'\}$ a \emph{pair of cherries}.  Trees without degree 2 vertices are guaranteed to have cherries whenever they have at least three vertices (consider the ends of a longest path) and we will take advantage of this fact repeatedly.

\begin{lemma}
\label{unrootedlemma}
Let $T$ be a cubic tree with $|V(T)| \ge 4$, let $(X,Y)$ be a split of $V_1(G)$, and let $\epsilon = \pm 1$.  There exists a split $(X^*,Y^*)$ of $T$ satisfying all of the following properties:
\begin{enumerate}
\item $X \subseteq X^*$ and $Y \subseteq Y^*$,
\item $\mathrm{disc}(X^*,Y^*) \in \epsilon \{0,1,2\}$
\item $T - E(X^*,Y^*)$ has at most one vertex of degree greater than one.
\end{enumerate}
\end{lemma}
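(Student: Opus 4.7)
The plan is to proceed by induction on $|V(T)|$. For the base case $|V(T)|=4$, the tree $T$ is $K_{1,3}$ with a center $v$ and three leaves, and I would verify the statement by checking each split of the leaves directly: when one part contains all three leaves, placing $v$ in the opposite part yields $\mathrm{disc}=0$ with no bad vertex; when the leaf split is $(2,1)$, the two assignments of $v$ produce $\mathrm{disc}\in\{-1,2\}$ with $v$ the unique bad vertex in the latter case; the case $(1,2)$ is symmetric. Thus for each $\epsilon\in\{\pm 1\}$ at least one assignment of $v$ meets all three conditions.

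For the inductive step $|V(T)|\ge 6$, I would select a cherry $\{u,u'\}$ with common neighbour $w$ and let $w'$ denote the third neighbour of $w$; note that $w'$ is internal because $|V(T)|\ge 6$. Form $T' = T - \{u,u'\}$, which is a cubic tree on $|V(T)|-2$ vertices in which $w$ has become a leaf. The argument then splits on how $\{u,u'\}$ meets the pair $(X,Y)$.

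When $u,u'\in X$ (and symmetrically when $u,u'\in Y$), I would apply the induction hypothesis to $T'$ with leaf split $(X\setminus\{u,u'\},\, Y\cup\{w\})$ and the same $\epsilon$, producing $(X_0^*,Y_0^*)$ satisfying (1)--(3). Setting $X^* = X_0^*\cup\{u,u'\}$ and $Y^* = Y_0^*$, the new edges $uw$ and $u'w$ both cross the cut, so $\mathrm{disc}(X^*,Y^*) = \mathrm{disc}(X_0^*,Y_0^*)$ and the vertices $u,u'$ are isolated in $T - E(X^*,Y^*)$; all three properties transfer immediately.

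The main obstacle is the remaining case, where one of $u,u'$ lies in $X$ and the other in $Y$. Here, extending any returned split of $T'$ back to $T$ shifts the discrepancy by exactly $+1$ or $-1$ according to whether $w$ was placed on the same side as the $X$-cherry-leaf or the $Y$-cherry-leaf in the call to the induction hypothesis, and it may create a new bad vertex at $w$ whose status depends on the side of $w'$. My plan is to consider the four candidate constructions arising from the two placements of $w$ combined with the two choices of $\epsilon'\in\{\pm 1\}$ in the inductive call, and to use the fact that $w$ is a leaf of $T'$ to observe that toggling its assignment in any given split changes the discrepancy by exactly $1$ while altering the bad-vertex count only at $w$ and $w'$. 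A careful case analysis on the discrepancy returned by each inductive call and on the side of $w'$ then shows that for each prescribed $\epsilon$, at least one candidate simultaneously achieves $\mathrm{disc}(X^*,Y^*)\in\epsilon\{0,1,2\}$ and the bound of at most one bad vertex in $T-E(X^*,Y^*)$; ruling out the pathological scenario in which every candidate fails either (2) or (3) is the technical heart of the proof.
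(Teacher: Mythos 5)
Your base case and your treatment of a monochromatic cherry pair coincide with the paper's proof. The gap is in the bichromatic case, and it is not a deferrable computation: the single-cherry reduction you propose does not give enough control to close the induction. The inductive hypothesis only guarantees that \emph{some} split of $T'$ exists with $\mathrm{disc}\in\epsilon'\{0,1,2\}$ and at most one high-degree vertex in the residual graph; you cannot choose which of the three discrepancy values you receive, nor the colour of $w'$, nor where the one permitted high-degree vertex sits. Concretely, with $u$ black and $u'$ white, keeping $w$ black shifts the discrepancy by $+1$ and makes $w$ a \emph{second} vertex of degree greater than one exactly when $ww'$ is monochromatic and the reduced split already spent its allowed bad vertex; flipping $w$ afterwards moves the discrepancy by a further $-2$ and can instead raise the degree of $w'$ in the residual graph. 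Checking all eight candidates (four inductive calls, each with an optional flip of $w$) for $\epsilon=+1$: the only one whose discrepancy is \emph{guaranteed} to land in $\{0,1,2\}$ is ``call with $w$ white and $\epsilon'=-1$, then flip $w$ to black,'' and that candidate violates property (3) whenever the returned split colours $w'$ black and places its bad vertex elsewhere --- a situation the inductive hypothesis does not let you exclude. Every other candidate can be defeated by an admissible return value (e.g.\ discrepancy $2$ from a call with $\epsilon'=1$). So the ``pathological scenario'' you set aside is exactly where the proof lives.

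The paper avoids this by never reducing a bichromatic cherry pair in isolation. After peeling off monochromatic cherry pairs as you do, it may assume every cherry pair is bichromatic, verifies $|V|\in\{4,6,8\}$ by hand, and for $|V|\ge 10$ performs one of two larger reductions: remove two bichromatic cherry pairs whose parents share a common neighbour $v$ (flipping $\epsilon$, since the extension changes the discrepancy by exactly $2$ with a sign the prover selects according to the colour $v$ receives), or remove a bichromatic cherry pair whose parent's neighbour also has a leaf neighbour (where the extension can be rebalanced so the discrepancy is unchanged). These configurations are engineered so that the new monochromatic edges and the potential high-degree vertex are placed by the prover rather than dictated by the inductive call; a separate contraction argument (replace each bichromatic cherry pair by a ``red'' vertex and find a cherry pair in the resulting cubic tree) shows that one of the two configurations always exists. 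To salvage your outline you would need either these larger reductions or a strengthened inductive statement that lets you choose among several returned splits.
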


\begin{proof}
We proceed by induction on $|V|$ with the base case $|V| =4$ holding by inspection.  Next suppose there exists a pair of cherries $\{w,w'\}$ both adjacent to the vertex $v \in V$ so that $w,w' \in X$.  In this case the result follows by applying induction to the tree $T - \{w,w'\}$ and the sets $X' = X \setminus \{w,w'\}$ and $Y' = Y \cup \{v\}$.  A similar argument holds if $w,w' \in Y$, so we may assume that we do not have a pair of cherries of the same colour.  If $|V| = 6$ or $|V| = 8$, then up to isomorphism and colour swapping there is just one possibility for $T$, and in both of these cases the result is straightforward to verify.  So we may now assume $|V| \ge 10$.  Next we will utilize some reductions that we describe with figures.  We will treat our splits $(X,Y)$ and $(X^*,Y^*)$ as (improper) colourings of the vertices, and we adopt the convention for our figures that a solid black vertex is in $X$ and an open white vertex is in $Y$.  

\bigskip

\noindent{\it Reduction 1.}

If $T$ contains a vertex $v$ with two neighbours both of which are adjacent with a pair of cherries, then we perform reduction 1 to reduce to a smaller tree $T'$ and flip the sign of $\epsilon$ as shown in the figure below.

\begin{center}
\includegraphics[height=2.5cm]{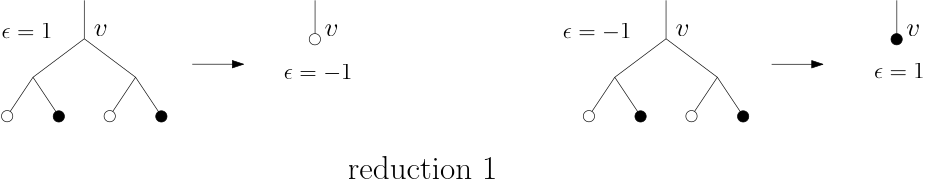}
\end{center}

Now we apply induction to colour the vertices of the smaller tree.  If we have the case on the left (right) in the above figure, we extend back to a colouring of $T$ by assigning both neighbours of $v$ in $V(T) \setminus V(T')$ the colour black (white).  This operation either increases or decreases the discrepancy by 2, but in either case we have the desired colouring of $T$.

\bigskip

\noindent{\it Reduction 2.}

If $T$ contains a vertex $v$ adjacent to a leaf vertex and another vertex $w$ so that $w$ is adjacent with a pair of cherries, then we perform reduction 2 to reduce $T$ to a smaller tree $T'$

\begin{center}
\includegraphics[height=2.5cm]{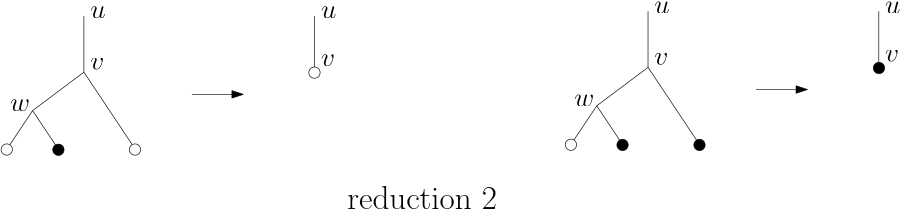}
\end{center}

Now we apply induction to colour the vertices of this smaller tree $T'$ using the original parameter $\epsilon$.  Let $u$ be the unique neighbour of $v$ that is not a leaf and is not equal to $w$ (as shown in the figure).  Since the two versions of reduction 2 are similar, we will assume that we have applied the one on the left (so $v$ is coloured white in $T'$).  If $u$ is coloured black in the resulting colouring of $T'$ then we keep this colour for $u$ and assign $w$ the colour black.  The resulting colouring of $T$ has the same discrepancy as that of $T'$ and will thus satisfy the lemma.  If $u$ is coloured white in the resulting colouring of $T'$ then we recolour $v$ to be black and assign $w$ the colour white.  Again the resulting colouring of $T$ has the same discrepancy as that of $T'$ and this satisfies the lemma.  

\bigskip

To complete the proof, it now suffices to show that one of our two reductions can be performed.  To see this, recall that every pair of cherries in $T$ have opposite colours, one black and one white.  As a bookkeeping device, for every pair of cherries with one white and the other black, let $v$ be the common neighbour, delete these cherries and assign $v$ the colour red.   After this operation has been performed wherever possible, the resulting tree is still cubic, and must have a pair of cherries at least one of which is coloured red.  In all cases, this pair of cherries gives us a reduction of one of our two types.
\end{proof}

With this lemma in place we are ready to prove the first part of our main theorem (restated here).

\begin{theorem}\label{treeBip}
A cubic graph $G$ satisfies the Ban-Linial conjecture if it has a cubic tree $T \subseteq G$ so that $G - E(T)$ is bipartite.
\end{theorem}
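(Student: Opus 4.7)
The plan is to combine Lemma \ref{unrootedlemma} with Lemma \ref{nearlyexternal2banlinial}: the former will supply a suitable two-colouring of the vertices of $T$, and the latter will convert it into the required Ban-Linial split. Assume $T$ is spanning (so $|V(T)| = |V(G)| \ge 4$, as $G$ is a simple cubic graph). The key observation is that because $G$ is cubic and $T$ is cubic, in $H := G - E(T)$ every $v \in V_3(T)$ has $\deg_H(v) = 0$ while every $v \in V_1(T)$ has $\deg_H(v) = 2$. Consequently the non-isolated vertices of the bipartite graph $H$ are precisely the leaves of $T$, so any bipartition of $H$ furnishes a split $(A, B)$ of $V_1(T)$ with the property that every edge of $H$ goes from $A$ to $B$.

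Next I feed $(A, B)$ into Lemma \ref{unrootedlemma} (with any choice of $\epsilon$) to obtain a split $(X^*, Y^*)$ of $V(G)$ extending $(A, B)$, with $|\mathrm{disc}_T(X^*, Y^*)| \le 2$ and with at most one vertex of degree greater than $1$ in $T - E(X^*, Y^*)$. Since $A \subseteq X^*$ and $B \subseteq Y^*$, no edge of $H$ is monochromatic, so $\mathrm{disc}_G(X^*, Y^*) = \mathrm{disc}_T(X^*, Y^*)$ has absolute value at most $2$, and Observation \ref{disc2bisect} then promotes $(X^*, Y^*)$ to a bisection of $G$.

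It remains to verify that $(X^*, Y^*)$ is nearly external. Because every $H$-edge crosses the split, $G - E(X^*, Y^*) = T - E(X^*, Y^*)$. Each $v \in V_1(T)$ has only a single $T$-edge and therefore has degree at most $1$ in this graph, while each $v \in V_3(T)$ has all three of its $G$-edges in $T$ and so inherits its degree from $T - E(X^*, Y^*)$; condition (3) of Lemma \ref{unrootedlemma} permits at most one such vertex to have degree at least $2$. Hence $(X^*, Y^*)$ is a nearly external bisection of $G$, and Lemma \ref{nearlyexternal2banlinial} delivers the required Ban-Linial split.

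The genuine difficulty is already packaged inside Lemma \ref{unrootedlemma}; the only thing that needs care here is the bookkeeping observation that the leaves of $T$ coincide with the non-isolated vertices of $H$, which simultaneously kills the $H$-contribution to the discrepancy and guarantees that $H$-edges cannot spoil near-externality.
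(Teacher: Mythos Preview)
Your argument is essentially identical to the paper's: take a proper $2$-colouring of $G-E(T)$, feed the induced split of $V_1(T)$ into Lemma~\ref{unrootedlemma}, then invoke Observation~\ref{disc2bisect} and Lemma~\ref{nearlyexternal2banlinial}. The only slip is the unjustified assumption that $T$ is spanning---the hypothesis does not force this (e.g.\ if $G$ itself is bipartite one may take $T$ to be a single edge)---but the fix is immediate: any vertex in $V(G)\setminus V(T)$ has all three of its edges in $H$, so it simply retains its bipartition colour, contributes nothing to the discrepancy, and is isolated in $G-E(X^*,Y^*)$.
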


\begin{proof}
Choose a proper 2-colouring $(X_0,Y_0)$ of the bipartite graph $G - E(T)$.  Now apply the previous lemma to the tree $T$ and the pair $(X_0 \cap V(T), Y_0 \cap V(T))$ to obtain a split $(X^*,Y^*)$ of $T$.  Define $X = X_0 \cup X^*$ and $Y = Y_0 \cup Y^*$.  By the previous lemma we have $|\mathrm{disc}(X,Y)| = |\mathrm{disc}(X^*,Y^*)| \le 2$ and it follows from this and Observation \ref{disc2bisect} that $(X,Y)$ is a bisection of $G$.  Furthermore, $G - E(X,Y)$ is equal to the graph $T - E(X^*,Y^*)$ plus some isolated vertices.  It follows from this and the previous lemma that $(X,Y)$ is nearly external.  Finally, applying Lemma \ref{nearlyexternal2banlinial} shows that $G$ satisfies the Ban-Linial Conjecture.
\end{proof}

For the second part of the main theorem, we have a cubic graph $G$ that can be decomposed into a tree $T$ and a cycle $C$.  In the case when $C$ has even size, the result follows from the first part of the theorem.  In the remaining case $C$ has odd size and our colouring of this subgraph will have some monochromatic edges, and this will require a variation of Lemma \ref{unrootedlemma} that allows for a distinguished root vertex to receive special treatment.  We have not provided a separate proof of this lemma since it follows immediately from the same reductions used to prove Lemma \ref{unrootedlemma}.

\begin{lemma}
\label{rootedlemma}
Let $T$ be a cubic tree with $|V(T)| \ge 2$.  Let $(X,Y)$ be a split of $V_1(G)$, let $r \in X$, and let $\epsilon = \pm 1$.  There exists a split $(X^*,Y^*)$ of $T$ satisfying all of the following properties:
\begin{enumerate}
\item $X \subseteq X^*$ and $Y \subseteq Y^*$,
\item $\mathrm{disc}(X^*,Y^*) \in \epsilon \{-1,0,1,2,3\}$
\item The graph $H = T \setminus E(X^*,Y^*)$ satisfies one of the following:
\begin{enumerate}
\item  $\mathrm{deg}_H(r) = 1$ and $H$ has maximum degree 1.
\item $\mathrm{deg}_H(r) = 0$ and $H$ has at most one vertex of degree greater than 1.
\end{enumerate}
\end{enumerate}
\end{lemma}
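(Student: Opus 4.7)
The plan is to follow the same induction on $|V(T)|$ that drove the proof of Lemma \ref{unrootedlemma}, reusing Reduction 1 and Reduction 2 verbatim, with only the extra bookkeeping needed to preserve the root $r$ and to select between conclusions (3)(a) and (3)(b).

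For the base cases, when $|V(T)|=2$ the tree is a single edge $rv$: if $v\in X$ set $X^*=\{r,v\}$ (discrepancy $1$, conclusion (3)(a)), and if $v\in Y$ take the trivial split (discrepancy $0$, conclusion (3)(b)). When $|V(T)|=4$ the tree is $K_{1,3}$, and a short enumeration over the colours of the two non-root leaves settles the lemma for each sign of $\epsilon$.

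For the inductive step, I would apply the same bookkeeping device as in Lemma \ref{unrootedlemma}: each mismatched cherry pair is deleted and its common neighbour painted red, after which the reduced tree contains a cherry pair triggering Reduction 1 or Reduction 2. When the triggering cherry pair does not contain $r$, the reduction goes through unchanged; the induction hypothesis is applied to the smaller tree rooted again at $r$, and the extension back to $T$ preserves the degree condition at $r$ while keeping the discrepancy inside the window $\epsilon\{-1,0,1,2,3\}$ (since each reduction shifts the discrepancy by at most $2$).

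The genuinely new situation is when every available reduction must involve $r$ as a cherry leaf. Here the two extra integers in the discrepancy window, together with the newly available conclusion (3)(a), supply exactly the additional slack needed: colouring $r$'s cherry common-neighbour the same as $r$ leaves the edge at $r$ uncut and drives the induction toward conclusion (3)(a), while colouring it oppositely cuts the edge at $r$ and drives the induction toward conclusion (3)(b). The main obstacle I anticipate is the (3)(a) branch, where the global ``$H$ is a matching'' condition must be propagated through the recursion; bundling (3)(a) and (3)(b) as alternative conclusions in the induction hypothesis, which is exactly the way this lemma is phrased, is what makes this possible, and it is also why the author remarks that the same reductions already suffice.
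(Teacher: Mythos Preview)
Your proposal is correct and matches the paper's approach exactly: the paper provides no separate proof of this lemma, remarking only that ``it follows immediately from the same reductions used to prove Lemma~\ref{unrootedlemma},'' and your sketch faithfully expands on that remark by reusing Reductions~1 and~2, handling the base cases directly, and explaining how the enlarged discrepancy window and the alternative conclusion (3)(a) absorb the new case where the root $r$ is forced into a cherry pair.
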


With this lemma in place, we can now prove the second part of our main theorem (restated here).

\begin{theorem}
A cubic graph $G$ satisfies the Ban-Linial conjecture if it can be decomposed into a tree and a cycle.
\end{theorem}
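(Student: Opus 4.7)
The plan is to reduce to the two tools already in place: when $C$ is even, apply Theorem \ref{treeBip} using the bipartite graph $G - E(T) = C$; when $C$ is odd, use Lemma \ref{rootedlemma} with the initial leaf-colouring coming from a near-proper 2-colouring of $C$. In the odd case I fix any edge $e = rr'$ of $C$, properly 2-colour the even-length path $C - e$ starting with $r \in X$, and observe that the other endpoint satisfies $r' \in X$ as well. The resulting split $(X_0, Y_0)$ of $V_1(T) = V(C)$ has $rr'$ as its unique monochromatic edge, so $\mathrm{disc}_C(X_0, Y_0) = 1$.

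Next I apply Lemma \ref{rootedlemma} to $T$ with input split $(X_0, Y_0)$, distinguished root $r$, and $\epsilon = -1$. The lemma returns a split $(X^*, Y^*)$ of $V(T) = V(G)$ extending $(X_0, Y_0)$ with $\mathrm{disc}_T(X^*, Y^*) \in \{-3, -2, -1, 0, 1\}$, so that
\[
\mathrm{disc}_G(X^*, Y^*) = \mathrm{disc}_T(X^*, Y^*) + \mathrm{disc}_C(X_0, Y_0) \in \{-2, -1, 0, 1, 2\}.
\]
By Observation \ref{disc2bisect}, $(X^*, Y^*)$ is a bisection of $G$. Writing $H = T \setminus E(X^*, Y^*)$, every vertex $v \in V(C) \setminus \{r, r'\}$ has its two $C$-neighbours in the opposite colour class and therefore at most one same-colour neighbour in $G$, so the only candidates for vertices $v$ with $\deg_{G - E(X^*,Y^*)}(v) \ge 2$ are $r$, $r'$, and interior vertices of $T$ with $\deg_H(v) \ge 2$. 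Condition 3 of Lemma \ref{rootedlemma} contributes at most one such interior vertex (zero in case (a), at most one in case (b)) and handles $r$ itself (problematic in case (a), not in case (b)).

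The key step, and the main obstacle, is showing that $r'$ contributes no additional problematic vertex, equivalently that the unique tree-edge at $r'$ is bichromatic. I plan to handle this via the structural control built into Lemma \ref{rootedlemma}: in case (a), the constraint $\Delta(H) = 1$ forces the two $T$-neighbours of $r$'s tree-neighbour (other than $r$) into $Y^*$, which pins down the colour of $r'$'s tree-neighbour whenever it is close to $r$'s in $T$; in case (b), if the configuration is bad (both $r'$ problematic and an interior vertex problematic), then the same split violates condition 3 of Lemma \ref{rootedlemma} when applied with root $r'$ (the interior vertex with $\deg_H > 1$ rules out case (a) for $r'$, while $\deg_H(r') = 1$ rules out case (b) for $r'$), so a symmetric application of the lemma with root $r'$ yields a strictly different output whose near-externality can then be established. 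Once $(X^*, Y^*)$ is confirmed to be nearly external, Lemma \ref{nearlyexternal2banlinial} converts it into the split required by the Ban--Linial conjecture.
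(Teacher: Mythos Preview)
Your reduction to Theorem \ref{treeBip} when $C$ is even is fine, and the discrepancy computation in the odd case is correct. The gap is exactly where you flag it: the vertex $r'$. Your proposed fixes do not work. In case (a) there is no reason whatsoever for $r'$'s tree-neighbour to be ``close to $r$'s in $T$''; the edge $rr'$ is an arbitrary edge of $C$, and $r,r'$ can be arbitrarily far apart in $T$. In case (b) your argument is circular: Lemma \ref{rootedlemma} is an existence statement, so the fact that the particular split $(X^*,Y^*)$ you already hold fails condition 3 for root $r'$ tells you nothing. Re-running the lemma with root $r'$ produces a new split, and now $r$ is uncontrolled. You cannot bootstrap your way out.

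The missing idea is that you get to \emph{choose} where the monochromatic edge of $C$ sits, and you should place one of its ends at a cherry of $T$. Concretely: take cherries $v,v'$ of $T$ with common neighbour $u$, colour $u$ white and $v,v'$ black, and propagate a proper 2-colouring around $C$ so that the unique monochromatic edge of $C$ is incident with $v$; call its other end $r$. Now $v$ has neighbours $u$ (white), $r$ (black), and one other $C$-neighbour (white), so $v$ has exactly one same-colour neighbour and is automatically fine. The remaining end $r$ is then handled by applying Lemma \ref{rootedlemma} to the smaller cubic tree $T' = T - \{v,v'\}$ (in which $u$ has become a leaf, already coloured) with root $r$ and $\epsilon=-1$. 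Since $E(G)=E(T')\sqcup\big(E(C)\cup\{uv,uv'\}\big)$ and the second piece contributes discrepancy $+1$, the bisection and near-externality follow exactly as in your computation, but now with no stray second endpoint to worry about.
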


\begin{proof}
We may assume $|V(G)| \ge 6$. Choose a tree $T$ and a cycle $C$ so that $\{ E(T), E(C) \}$ is a partition of $E(G)$. If $C$ has even-length, the theorem follows from Theorem \ref{treeBip}, and so we may assume $C$ has odd-length. 

Let $v,v'$ be cherries of $T$ and let $u$ be the common neighbour of $v,v'$ in $T$.  Let $H$ be the subgraph of $G$ consisting of the cycle $C$ together with the added vertex $u$ and the edges $uv, uv'$.  Following our convention, we choose a split $(X_0,Y_0)$ of $H$ that we treat as a colouring (with $X_0$ vertices black and $Y_0$ vertices white) in such a way that both $v,v'$ are black (and so $u$ is white), and there is a unique monochromatic edge, coloured black, one end of which is $v$.  Let $r$ be the other end of this monochromatic edge. Then $r \neq u$, but it could be that $r = v'$.

Let $T' = T - \{v,v'\}$. If $r = v'$, apply Lemma \ref{unrootedlemma} to $T'$ with split $(X_0 \cap V_1(T'), Y_0 \cap V_1(T'))$ and $\epsilon = -1$. If $r \neq v'$, apply Lemma \ref{rootedlemma} to the tree $T'$ with the split $(X_0 \cap V_1(T'), Y_0 \cap V_1(T'))$, the vertex $r$, and $\epsilon = -1$. 
In either case, let $(X^*,Y^*)$ be the resulting split of $T'$ and define $(X,Y) = (X_0 \cup X^*, Y_0 \cup Y^*)$.  Now $E(G) = E(T') \sqcup E(H)$, thus $\mathrm{disc}_G(X,Y) = \mathrm{disc}_{T'}(X^*,Y^*) + 1 \in \{ -2, \ldots , 2\}$ so Observation \ref{disc2bisect} implies that $(X,Y)$ is a bisection of $G$. It follows from Part 3 of Lemma \ref{unrootedlemma} or \ref{rootedlemma} as appropriate (and the split of $H$) that this bisection is nearly external, which means the result follows from Lemma \ref{nearlyexternal2banlinial}.
\end{proof}

\bibliographystyle{acm}
\bibliography{bib}

\end{document}